\documentclass[12pt, reqno]{amsart}
\usepackage{amsmath, amsthm, amscd, amsfonts, amssymb, graphicx, color}
\usepackage[english]{babel}
\usepackage{cite}
\usepackage[bookmarksnumbered, colorlinks, plainpages]{hyperref}

\textheight 22.5truecm \textwidth 14.5truecm
\setlength{\oddsidemargin}{0.35in}\setlength{\evensidemargin}{0.35in}

\setlength{\topmargin}{-.5cm}

\newtheorem{theorem}{Theorem}[section]
\newtheorem{lemma}[theorem]{Lemma}

\newtheorem{corollary}[theorem]{Corollary}
\theoremstyle{definition}

\theoremstyle{remark}

\numberwithin{equation}{section}
\linespread{1}
\begin{document}

\title[2-Local   derivations  on  algebras]{2-Local   derivations
 on  algebras of locally measurable operators}


\author{Sh. A. Ayupov}
\address{Institute of
 Mathematics National University of
Uzbekistan,
 100125  Tashkent,   Uzbekistan
 and
 the Abdus Salam International Centre
 for Theoretical Physics (ICTP),
  Trieste, Italy}
\email{sh$_{-}$ayupov@mail.ru}
 \thanks{The second  named author would like
to acknowledge the hospitality of the Chern Institute of
Mathematics, Nankai University  (China). }

\author{K. K. Kudaybergenov}
\address{Department of Mathematics, Karakalpak state university\\
Ch. Abdirov 1,  230113, Nukus,    Uzbekistan}
\email{karim2006@mail.ru}

\author{A. K. Alauadinov}
\address{Institute of
 Mathematics National University of
Uzbekistan,
 100125  Tashkent,   Uzbekistan}
\email{amir$_{-}$t85@mail.ru}

\subjclass[2000]{Primary 46L57; Secondary 46L40}



\keywords{measurable  operator, derivation,
2-local derivation}

\begin{abstract}
The paper is devoted to $2$-local derivations  on the algebra
$LS(M)$ of all locally measurable   operators
affiliated with a type I$_\infty$ von Neumann algebra $M.$
We prove that every  $2$-local derivation on
$LS(M)$ is a derivation.
\end{abstract}

\maketitle
\section{Introduction}

Given an algebra $\mathcal{A},$ a linear operator $D:\mathcal{A}\rightarrow \mathcal{A}$ is
called a \textit{derivation}, if $D(xy)=D(x)y+xD(y)$ for all $x,
y\in \mathcal{A}$ (the Leibniz rule). Each element $a\in \mathcal{A}$ implements a
derivation $D_a$ on $\mathcal{A}$ defined as $D_a(x)=[a, x]=ax-xa,$ $x\in \mathcal{A}.$ Such
derivations $D_a$ are said to be \textit{inner derivations}. If
the element $a,$ implementing the derivation $D_a,$ belongs to a
larger algebra $\mathcal{B}$ containing $\mathcal{A},$ then $D_a$ is called \textit{a
spatial derivation} on $\mathcal{A}.$

There exist various types of linear operators which are close to derivations
\cite{Kad,  Kim,  Sem1}. In particular
R.~Kadison \cite{Kad} has introduced and
investigated so-called local derivations on von Neumann algebras and some polynomial algebras.

A linear operator $\Delta$ on an algebra $\mathcal{A}$ is called a
\textit{local derivation} if given any $x\in \mathcal{A}$ there exists a
derivation $D$ (depending on $x$) such that $\Delta(x)=D(x).$  The
main problems concerning this notion are to find conditions under
which local derivations become derivations and to present examples of algebras
with local derivations that are not derivations \cite{Kad}. In particular
Kadison \cite{Kad} has proved that each
continuous local derivation from a von Neumann algebra
$M$ into a dual $M$-bimodule is a derivation.

In 1997, P. Semrl \cite{Sem1}  introduced the concept of
$2$-local derivations and automorphisms.
A  map $\Delta:\mathcal{A}\rightarrow\mathcal{A}$  (not linear in general) is called a
 $2$-\emph{local derivation} if  for every $x, y\in \mathcal{A},$  there exists
 a derivation $D_{x, y}:\mathcal{A}\rightarrow\mathcal{A}$
such that $\Delta(x)=D_{x, y}(x)$  and $\Delta(y)=D_{x, y}(y).$
A  map $\Theta:\mathcal{A}\rightarrow\mathcal{A}$  (not linear in general) is called a
 $2$-\emph{local automorphism}
 if  for every $x, y\in \mathcal{A},$  there exists
 an automorphism  $\Phi_{x, y}:\mathcal{A}\rightarrow\mathcal{A}$
such that $\Theta(x)=\Phi_{x, y}(x)$  and $\Theta(y)=\Phi_{x, y}(y).$
Local and $2$-local maps have been studied on different operator algebras by many
authors
\cite{Nur,  AKN, JMAA,  Bre1, AKNA, Liu,  Kad, Kim, Lar, Lin, Mol, Sem1, Sem2}.

In \cite{Sem1}, P. Semrl described
$2$-local derivations and automorphisms on the algebra $B(H)$ of
 all bounded linear operators on the infinite-dimensional
separable Hilbert space $H.$  A similar
description for the finite-dimensional case appeared later in \cite{Kim},
\cite{Mol}. In the paper \cite{Lin}
$2$-local derivations and automorphisms have been described on
matrix algebras over finite-dimensional division rings.
J.~H.~Zhang and  H.~X.~Li  \cite{Zhang}
 described $2$-local  derivations on symmetric
 digraph algebras  and constructed  a $2$-local derivation
  on  the algebra
of all upper triangular complex $2\times 2$-matrices
which is not a
derivation.
In \cite{JMAA}   first two authors considered
$2$-local derivations on the algebra $B(H)$ of all linear bounded operators on an
arbitrary (no separability is assumed) Hilbert space $H$ and proved that every $2$-local
derivation on $B(H)$ is a derivation.

The present paper  is devoted to study
$2$-local derivations on   $\ast$-subalgebras
of the algebra  $LS(M)$
 of all locally measurable operators with
respect to type I$_\infty$ von Neumann algebra $M.$
 We prove that every $2$-local derivations on every  $\ast$-subalgebra $\mathcal{A}$
 in  $LS(M)$,
such that $M\subseteq\mathcal{A}$, is a  derivation.

\section{Algebra of locally measurable operators}

Let  $B(H)$ be the $\ast$-algebra of all
bounded linear operators on a Hilbert space $H,$
and let $\textbf{1}$ be the identity operator
on $H.$ Consider a von Neumann algebra $M\subset B(H).$
 Denote by $P(M)=\{p\in M: p=p^2=p^\ast\}$ the lattice of all projections in
$M$ and by $P_{fin}(M)$
the set  of all finite
projections in $P(M).$

A linear subspace  $\mathcal{D}$ in  $H$ is said to be
\emph{affiliated} with  $M$ (denoted as  $\mathcal{D}\eta M$), if
$u(\mathcal{D})\subset \mathcal{D}$ for every unitary  $u$ from
the commutant
$$M'=\{y\in B(H):xy=yx, \,\forall x\in M\}$$ of the von Neumann algebra $M.$

A linear operator  $x: \mathcal{D}(x)\rightarrow H,$ where the  domain  $\mathcal{D}(x)$
of $x$ is a linear subspace of $H,$ is said to be \textit{affiliated} with  $M$ (denoted as  $x\eta M$) if
$\mathcal{D}(x)\eta M$ and $u(x(\xi))=x(u(\xi))$
 for all  $\xi\in
\mathcal{D}(x)$  and for every unitary  $u\in M'.$

A linear subspace $\mathcal{D}$ in $H$ is said to be \textit{strongly
dense} in  $H$ with respect to the von Neumann algebra  $M,$ if
\begin{itemize}
\item  $\mathcal{D}\eta M;$

\item  there exists a sequence of projections
$\{p_n\}_{n=1}^{\infty}$ in $P(M)$  such that
$p_n\uparrow\textbf{1},$ $p_n(H)\subset \mathcal{D}$ and
$p^{\perp}_n=\textbf{1}-p_n$ is finite in  $M$ for all
$n\in\mathbf{N}$.
\end{itemize}
A closed linear operator  $x$ acting in the Hilbert space $H$ is said to be
\textit{measurable} with respect to the von Neumann algebra  $M,$ if
 $x\eta M$ and $\mathcal{D}(x)$ is strongly dense in  $H.$

 Denote by $S(M)$  the set of all linear operators on $H,$ measurable with
respect to the von Neumann algebra $M.$ If $x\in S(M),$
$\lambda\in\mathbf{C},$ where $\mathbf{C}$  is the field of
complex numbers, then $\lambda x\in S(M)$  and the operator
$x^\ast,$  adjoint to $x,$  is also measurable with respect to $M$
(see \cite{Seg}). Moreover, if $x, y \in S(M),$ then the operators
$x+y$  and $xy$  are defined on dense subspaces and admit closures
that are called, correspondingly, the strong sum and the strong
product of the operators $x$  and $y,$  and are denoted by
$x\stackrel{.}+y$ and $x \ast y.$ It was shown in  \cite{Seg} that
$x\stackrel{.}+y$ and $x \ast y$ belong to $S(M)$ and
these algebraic operations make $S(M)$ a $\ast$-algebra with the
identity $\textbf{1}$  over the field $\mathbf{C}.$ It is clear that, $M$ is a
$\ast$-subalgebra of $S(M).$ In what follows, the strong sum and
the strong product of operators $x$ and $y$  will be denoted in
the same way as the usual operations, by $x+y$  and $x y.$

A closed linear operator $x$ in  $H$  is said to be \emph{locally
measurable} with respect to the von Neumann algebra $M,$ if $x\eta
M$ and there exists a sequence $\{z_n\}_{n=1}^{\infty}$ of central
projections in $M$ such that $z_n\uparrow\textbf{1}$ and $z_nx \in
S(M)$ for all $n\in\mathbf{N}$ (see \cite{Yea}).

Denote by $LS(M)$ the set of all linear operators that are locally
measurable with respect to $M.$ It was proved in \cite{Yea}  that
$LS(M)$ is a $\ast$-algebra over the field $\mathbf{C}$ with the
identity $\textbf{1},$ the operations of strong addition, strong
multiplication, and passing to the adjoint. In such a case, $S(M)$
is a $\ast$-subalgebra in $LS(M).$ In the case where $M$ is a
finite von Neumann algebra or a factor, the algebras $S(M)$ and
$LS(M)$ coincide. This is not true in the general case. In
\cite{Mur2} the class of von Neumann algebras $M$ has been described for
which the algebras  $LS(M)$ and  $S(M)$ coincide.

We
say that a measure $\mu$  on a measure space
$(\Omega,\Sigma,\mu)$
 has the direct sum property if there is a family
 $\{\Omega_{i}\}_{i\in
J}\subset\Sigma,$ $0<\mu(\Omega_{i})<\infty,\,i\in J,$ such that
for any $A\in\Sigma,\,\mu(A)<\infty,$ there exist a countable
subset $J_{0 }\subset J$ and a set  $B$ with zero measure such
that $A=\bigcup\limits_{i\in J_{0}}(A\cap
\Omega_{i})\cup B.$

It is well-known (see e.g.~\cite{Seg}) that for each  commutative von
Neumann algebra
 $M$ there exists a  measure
space $(\Omega, \Sigma, \mu)$ with $\mu$  having the direct sum property
such that $M$
is $\ast$-isomorphic to the algebra $L^{\infty}(\Omega, \Sigma, \mu)$
of all (equivalence classes of) complex essentially bounded
measurable functions on  $(\Omega, \Sigma, \mu)$ and in this case
$LS(M)=S(M)\cong L^{0}(\Omega, \Sigma, \mu),$ where $L^{0}(\Omega,
\Sigma, \mu)$ the algebra of all (equivalence classes of) complex
measurable functions on $(\Omega, \Sigma, \mu).$

Further we consider the algebra  $S(Z(M))$  of operators which are measurable
with respect to the  center $Z(M)$ of the von Neumann algebra $M.$
Since  $Z(M)$ is an abelian von Neumann algebra  it
 is $\ast$-isomorphic to $ L^{\infty}(\Omega, \Sigma, \mu)$
   for an appropriate measure space $(\Omega, \Sigma, \mu)$.
   Therefore the algebra  $S(Z(M))$ coincides
   with $Z(LS(M))$ and  can be
 identified with the algebra $ L^{0}(\Omega, \Sigma, \mu).$

Let  $M$ be a von Neumann algebra. Given an element   $x\in LS(M)$
the smallest   projection  $p$ in $M$ with
$xp=x$  is called the  \emph{right support}
of   $x$ and denoted by $r(x).$
The   \emph{left support} $l(x)$ is smallest projection
$p$ in $M$ with $p x=x.$  For a $\ast$-subalgebra  $\mathcal{A}\subset LS(M)$ denote
$$\mathcal{F}(\mathcal{A})=\{x\in A: l(x)\in P_{fin}(M)\}.$$

From the definition  of the algebra $\mathcal{F}(\mathcal{A})$ we have that the
following properties are equivalent:
\begin{enumerate}
\item $x\in \mathcal{F}(\mathcal{A});$

\item $\exists\, p\in P_{fin}(M)$ such that  $px=x;$

\item $\exists\, p\in P_{fin}(M)$ such that $xp=x;$

\item  $\exists\, p\in P_{fin}(M)$ such that  $pxp=x.$
\end{enumerate}
Note that  $\mathcal{F}(A)$ is an $\ast$-ideal in  $\mathcal{A}.$
Moreover the algebra $\mathcal{F}(\mathcal{A})$
 is semi-prime, i.e. if  $a\in \mathcal{F}(\mathcal{A})$ and
$a \mathcal{F}(\mathcal{A}) a=\{0\}$
then $a=0.$ Indeed, let $a\in \mathcal{F}(\mathcal{A})$
 and $a \mathcal{F}(\mathcal{A}) a=\{0\},$
i.e. $axa=0$ for all $x\in \mathcal{F}(\mathcal{A}).$ In particular for
$x=a^{\ast}$ we have $aa^{\ast}a=0$ and hence $a^{\ast}aa^{\ast}a=0,$ i.e.
$|a|^4=0.$ Therefore $a=0.$

Recall the definition of the faithful normal semifinite extended
center valued trace on the algebra $M$ (see \cite{Tak}).

 Let  $M$ be an arbitrary von Neumann algebra with the
 center \linebreak $Z(M)\equiv L^\infty(\Omega,\Sigma, \mu).$
 By  $L_+$ we denote the set of all measurable  functions  $f:
(\Omega,\Sigma, \mu)\rightarrow [0,{\infty}]$ (modulo functions
equal to zero $\mu$-almost everywhere).
Then there exists a map  $\Phi: M_+\rightarrow L_{+}$ with the
following properties:

\begin{enumerate}
\item $\Phi(x+y)=\Phi(x)+\Phi(y)$  for $x, y \in M_+;$

\item $\Phi(ax)=a\Phi(x)$  for $a\in Z(M)_+, x \in M_+;$

\item $\Phi(xx^\ast)=\Phi(x^\ast x);$

\item $\Phi(x^\ast x)=0$ $\Rightarrow$ $x=0;$

\item  $\Phi\left(\sup \limits_{i \in
J}x_{i}\right)=\sup \limits_{i \in
J}\Phi(x_{i})$
for any bounded increasing net $\{x_i\}$ in $M_+.$
\end{enumerate}

This map  $\Phi: M_+\rightarrow L_+,$ is a called the \emph{extended
center valued trace} on  $M.$

The set $\{x\in M: \Phi(x^\ast x)\in Z(M)\}$ is an ideal $M.$ If this ideal
is $\sigma$-weakly dense in $M,$ then
$\Phi$ is said to be \textit{semifinite}.

It is well-known (see e.g.~\cite{Tak}) that
a  von
Neumann algebra
 $M$ is semifinite
 if and only if $M$ admits a faithful,
 semifinite,
 normal extended center valued trace.

Let us remark that a projection
$p\in M$ is finite if and only if $\Phi(p)\in S(Z(M)).$
Hence for any $x\in \mathcal{F}(LS(M))\cap M_+$ we have that
$\Phi(x)\in S(Z(M)).$

 Note that the algebra $LS(M)$ has the following remarkable property:
 given any family  $\{z_i\}_{i\in I}$ of mutually orthogonal
central projections in $M$ with $\bigvee\limits_{i\in
I}z_i=\textbf{1}$ and a  family of elements $\{x_i\}_{i\in I}$ in
$LS(M)$ there exists a unique element $x\in LS(M)$ such that $z_i
x=z_i x_i$ for all $i\in I.$ This element is denoted by
$x=\sum\limits_{i\in I}z_i x_i$
 (see \cite{Mur2}). Conversely if
$M$ is  a  type I von Neumann algebra then for an arbitrary
element $x\in LS(M)$  there exists a sequence $\{z_n\}$ of
mutually orthogonal central projections with $\bigvee\limits_{n\in
\mathbb{N}}z_n=\mathbf{1}$ such that $z_n x\in M$ for all $n\in
\mathbb{N}$ (see \cite{Alb2}). For $0\leq x\in \mathcal{F}(LS(M))$
set
\begin{equation}\label{TR}
\Phi(x)=\sum\limits_{n\in \mathbb{N}}z_n \Phi(z_n x).
\end{equation}
Since the trace $\Phi$ is $Z(M)$-homogeneous, the equality
\eqref{TR} gives a well-defined map
from
$\mathcal{F}(LS(M))_+$ into $S(Z(M)).$

Since each  element of $\mathcal{F}(LS(M))$  is a finite linear combinations
 of positive  elements from $\mathcal{F}(LS(M))$
we can naturally extend $\Phi$ to a $S(Z(M))$-valued trace  on  $\mathcal{F}(LS(M)).$

Now let $\mu$ be an arbitrary faithful normal semifinite trace on $Z(M).$ Put
$\tau=\mu\circ\Phi.$
Then by \cite[Lemma 2.16]{Tak} we have that
$$
\tau(xy)=\tau(yx)
$$
for all $x\in M,$ $y\in \mathcal{F}(LS(M))\cap M.$
Therefore
$$
\Phi(xy)=\Phi(yx)
$$
for all $x\in LS(M),$ $y\in \mathcal{F}(LS(M)).$
Since  the trace $\Phi$ maps the set
$\mathcal{F}(LS(M))$ into $S(Z(M))$ and  $\mathcal{F}(LS(M))$ is an ideal in
$LS(M)$  we have
$$
\Phi(axy) =  \Phi((ax)y) =
\Phi((ya)x) =
\Phi(xya),
$$
i.e.
\begin{equation}\label{tr}
\Phi(axy) =\Phi(xya)
\end{equation}
for all $a, x\in LS(M),$ $y\in \mathcal{F}(LS(M)).$

\section{Main results}

Let $D$ be a derivation on $LS(M).$ Then
 $D$  maps the ideal
$\mathcal{F}(LS(M))$ into itself. Indeed, for any
$x\in \mathcal{F}(LS(M))$ there exists a finite projection $p\in M$
such that $x=xp.$ Then
$$
D(x)=D(xp)=D(x)p+xD(p),
$$
and therefore $D(x)\in \mathcal{F}(LS(M)).$
Hence any $2$-local derivation on $LS(M)$ also maps
$\mathcal{F}(LS(M))$ into itself.

\begin{lemma}\label{A} Let
 $b\in LS(M)$ be an arbitrary element.
If  $\Phi(x b)=0$ for all $x\in \mathcal{F}(LS(M))$
then  $b=0.$
\end{lemma}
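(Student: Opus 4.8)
The plan is to choose the test element $x$ so that the hypothesis $\Phi(xb)=0$ collapses to the statement that $\Phi$ vanishes on a positive element, and then to invoke the faithfulness of the (extended) trace $\Phi$ on $\mathcal{F}(LS(M))_{+}$. Concretely, I would fix an arbitrary $p\in P_{fin}(M)$ and set $x=b^{\ast}p$. Since $xp=b^{\ast}pp=b^{\ast}p=x$ with $p\in P_{fin}(M)$, the third of the equivalent characterisations of $\mathcal{F}(LS(M))$ recalled above shows $x\in\mathcal{F}(LS(M))$, so the hypothesis applies and gives
\[
0=\Phi(xb)=\Phi(b^{\ast}pb)=\Phi\bigl((pb)^{\ast}(pb)\bigr).
\]
Here $pb\in\mathcal{F}(LS(M))$ because $\mathcal{F}(LS(M))$ is an ideal, hence $(pb)^{\ast}(pb)\in\mathcal{F}(LS(M))_{+}$ and the last expression is legitimate.

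Next I would verify that the extension of $\Phi$ to $\mathcal{F}(LS(M))_{+}$ built via \eqref{TR} is faithful. Let $0\le d\in\mathcal{F}(LS(M))$ with $\Phi(d)=0$. Since $M$ is of type I, choose mutually orthogonal central projections $\{z_{n}\}$ with $\bigvee_{n}z_{n}=\mathbf{1}$ and $z_{n}d\in M$; then $z_{n}d=z_{n}dz_{n}\in M_{+}$, and $Z(M)$-homogeneity of $\Phi$ yields $\Phi(z_{n}d)=z_{n}\Phi(d)=0$. Applying the faithfulness property $\Phi(y^{\ast}y)=0\Rightarrow y=0$ of the extended centre-valued trace on $M$ to $y=(z_{n}d)^{1/2}\in M$ forces $z_{n}d=0$ for every $n$, whence $d=0$. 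Taking $d=(pb)^{\ast}(pb)$, we conclude $(pb)^{\ast}(pb)=0$, i.e.\ $|pb|^{2}=0$, and therefore $pb=0$ for every $p\in P_{fin}(M)$.

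It remains to pass from $pb=0$ to $b=0$. From $pb=0$ we get $b=p^{\perp}b$, so $p^{\perp}$ is a projection with $p^{\perp}b=b$, which means $l(b)\le p^{\perp}$ for every $p\in P_{fin}(M)$. Hence $l(b)\le\bigwedge_{p\in P_{fin}(M)}p^{\perp}=\bigl(\bigvee_{p\in P_{fin}(M)}p\bigr)^{\perp}$. Since $M$ is semifinite (being of type I$_{\infty}$), the supremum of its finite projections is $\mathbf{1}$, so $l(b)\le\mathbf{1}^{\perp}=0$, that is $b=0$.

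The only step that needs genuine care is the second paragraph: the faithfulness of $\Phi$ after its extension from $M_{+}$ to $\mathcal{F}(LS(M))_{+}$. Everything else is bookkeeping with supports and with the ideal $\mathcal{F}(LS(M))$. One should also keep in mind that the central projections $\{z_{n}\}$ must be chosen simultaneously so that $z_{n}d\in M$ and so that the restriction of the extended $\Phi$ to $M_{+}$ agrees with the original centre-valued trace; both facts are already part of the construction preceding \eqref{TR}.
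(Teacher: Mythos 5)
Your proof is correct and follows essentially the same route as the paper: test the hypothesis against an element of the form (finite projection)$\cdot b^{\ast}$ (you use $b^{\ast}p$, the paper uses $eb^{\ast}$), reduce to $\Phi$ of a positive element, invoke faithfulness, and then exhaust $\mathbf{1}$ by finite projections. Your choice $x=b^{\ast}p$ even avoids the appeal to the tracial identity \eqref{tr} that the paper needs to rewrite $\Phi(eb^{\ast}b)$ as $\Phi((be)^{\ast}(be))$, and your explicit verification that faithfulness survives the extension of $\Phi$ to $\mathcal{F}(LS(M))_{+}$ fills a step the paper leaves implicit.
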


\begin{proof} Let  $b\in LS(M).$ For any
finite  projection  $e\in LS(M)$ we have $e b^\ast\in
\mathcal{F}(LS(M))$ and therefore
by the assumption of the lemma it follows that
$\Phi(e b^\ast b)=0.$
Thus
$$
0=\Phi(e b^\ast b)=\Phi(e^2 b^\ast b)=
\Phi(e b^\ast b e)=\Phi((be)^\ast(be)),
$$
i.e.
$$
\Phi((be)^\ast(be))=0.
$$
Since the trace $\Phi$ is faithful, we obtain $(be)^\ast(be)=0,$ i.e. $be=0.$

Now  take a family  of finite    projections
$\{e_\alpha\}_{\alpha\in J}$ in
 $M$ such that $e_\alpha\uparrow\mathbf{1}.$
Then
$$
0=be_\alpha b^\ast\uparrow b b^\ast,
$$
i.e. $bb^\ast=0.$ Thus $b=0.$
The proof is complete.
\end{proof}

\begin{lemma}\label{H}
Let $M$ be an arbitrary von Neumann algebra of type I$_\infty$
and let    $\Delta: LS(M)\rightarrow LS(M)$ be a $2$-local derivation.
Then

\begin{enumerate}
\item  $\Delta$ is $S(Z(M))$-homogenous,  i.e.
$\Delta(cx)=c\Delta(x)$ for all $c\in S(Z(M)),$
$x\in LS(M);$

\item $\Delta(x^2)=\Delta(x)x+x\Delta(x)$ for all
$x\in LS(M).$
\end{enumerate}
 \end{lemma}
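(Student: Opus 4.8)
The plan is to extract both statements directly from the $2$-local axiom by testing $\Delta$ on well-chosen pairs, the only external input being the fact that an ordinary derivation of $LS(M)$ annihilates the center $Z(M)$.

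For part (2) the argument is immediate: given $x\in LS(M)$, apply the definition of a $2$-local derivation to the pair $x,\,x^{2}$ to obtain a derivation $D=D_{x,x^{2}}$ with $\Delta(x)=D(x)$ and $\Delta(x^{2})=D(x^{2})$. Since $D$ satisfies the Leibniz rule, $D(x^{2})=D(x)x+xD(x)$, and substituting gives $\Delta(x^{2})=\Delta(x)x+x\Delta(x)$. No structural input on $M$ is needed here.

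For part (1) I would first record the preliminary fact that \emph{every} derivation $D$ of $LS(M)$ vanishes on $Z(M)$. For a central projection $z$ this is elementary: $z$ splits $LS(M)$ as a direct product $zLS(M)\oplus z^{\perp}LS(M)$ of algebras, and writing $D(z)=(a,b)$ in this decomposition one has $zD(z)=D(z)z=(a,0)$, so the identity $D(z)=D(z^{2})=D(z)z+zD(z)$ forces $(a,b)=(2a,0)$, whence $a=b=0$ and $D(z)=0$. For a general $b\in Z(M)$ the vanishing $D(b)=0$ follows from the known description of derivations on $LS(M)$ for $M$ of type I (the restriction of $D$ to $M$ is spatial, so $D|_{Z(M)}=0$). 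This last point is the one place where results outside the present excerpt are used, and it is the main obstacle if one wants a fully self-contained treatment.

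Granting this, fix $c\in S(Z(M))$ and $x\in LS(M)$. For a central projection $z$, choosing $D=D_{x,zx}$ gives $\Delta(zx)=D(zx)=D(z)x+zD(x)=z\,\Delta(x)$, using $D(z)=0$ and $D(x)=\Delta(x)$; in particular (taking $\lambda\mathbf 1$ in place of $z$, or the pair $x,\lambda x$) one recovers that $\Delta$ is $\mathbb C$-homogeneous. Next let $z_{n}$ be the central spectral projection of $c$ corresponding to $\{\,|c|\le n\,\}$, so that $z_{n}\uparrow\mathbf 1$ and $b_{n}:=c\,z_{n}\in Z(M)$. By the previous line $z_{n}\Delta(cx)=\Delta(z_{n}\,cx)=\Delta(b_{n}x)$, and choosing $D=D_{x,b_{n}x}$ yields $\Delta(b_{n}x)=D(b_{n})x+b_{n}D(x)=b_{n}\,\Delta(x)$, because $b_{n}\in Z(M)$. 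Hence $z_{n}\bigl(\Delta(cx)-c\,\Delta(x)\bigr)=b_{n}\Delta(x)-z_{n}c\,\Delta(x)=0$ for every $n$; passing to the mutually orthogonal central projections $z_{n}-z_{n-1}$ (with $z_{0}=0$), whose supremum is $\mathbf 1$, and using the uniqueness in the $\sum_{i}z_{i}x_{i}$-decomposition property of $LS(M)$ recalled above, I conclude $\Delta(cx)=c\,\Delta(x)$.
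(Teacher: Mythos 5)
Your proof is correct, and part (2) is verbatim the paper's argument. For part (1) you arrive at the same place by a longer route: the paper simply invokes \cite[Theorem 2.7]{Alb2} to say that every derivation on $LS(M)$ (for $M$ of type I$_\infty$) is inner, hence automatically $S(Z(M))$-linear, and then applies the $2$-local axiom to the single pair $(x,cx)$ to get $\Delta(cx)=D_{x,cx}(cx)=cD_{x,cx}(x)=c\Delta(x)$ in one line. You instead use the same external theorem only to conclude that derivations vanish on $Z(M)$, and then bootstrap from bounded central elements to all of $S(Z(M))$ via the spectral truncations $z_n=\chi_{\{|c|\le n\}}$ and the central decomposition property of $LS(M)$. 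Both arguments rest on the identical citation, so nothing is gained in generality; your version costs an extra truncation step but has the minor virtue of isolating exactly how much of the structure theorem is needed (only $D|_{Z(M)}=0$, not full $S(Z(M))$-linearity). You are also right to flag that the vanishing of $D$ on $Z(M)$ genuinely requires the innerness result and is not elementary: the restriction of a derivation to the center $S(Z(M))\cong L^{0}$ could a priori be a nonzero derivation of $L^{0}$, and it is precisely the type I$_\infty$ hypothesis that excludes this.
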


\begin{proof} (1) For
 each $x\in LS(M),$ and for
 $c\in S(Z(M))$ there exists a derivation $D_{x, cx}$
 such that $\Delta(x)=D_{x, c x}(x)$ and
 $\Delta(c x)=D_{x, c x}(c x).$
Since $M$ is a  type I$_\infty$ then
by \cite[Theorem 2.7]{Alb2} every derivation on $LS(M)$ is inner,
in particular,
$S(Z(M))$-linear. Therefore
$$
\Delta(c x)=D_{x, c x}(c x)=c
D_{x, c x}(x)=c\Delta(x).
$$
Hence, $\Delta$ is $S(Z(M))$-homogenous.

(2)
For each $x\in LS(M),$ there exists a derivation $D_{x, x^2}$
 such that $\Delta(x)=D_{x, x^2}(x)$ and $\Delta(x^2)=D_{x, x^2}(x^2).$ Then
$$
\Delta(x^2)=D_{x, x^2}(x^2)=D_{x, x^2}(x)x+xD_{x, x^2}(x)=\Delta(x)x+x\Delta(x)
$$
 for all $x\in LS(M).$
 The proof is complete.
\end{proof}

\begin{lemma}\label{B}
Let $M$ be an arbitrary von Neumann algebra of type I$_\infty.$
If   $\Delta: LS(M)\rightarrow LS(M)$ is a $2$-local derivation such that
 $\Delta|_{\mathcal{F}(LS(M))}\equiv 0,$ then $\Delta\equiv 0.$
 \end{lemma}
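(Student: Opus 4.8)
The plan is to fix an arbitrary element $x\in LS(M)$ and to deduce $\Delta(x)=0$ from Lemma~\ref{A}. Since $\Delta(x)\in LS(M)$, by Lemma~\ref{A} it suffices to prove that $\Phi\bigl(y\,\Delta(x)\bigr)=0$ for every $y\in\mathcal{F}(LS(M))$. The expression makes sense because $\mathcal{F}(LS(M))$ is an ideal, so $y\,\Delta(x)\in\mathcal{F}(LS(M))$, on which $\Phi$ is defined.

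So fix $y\in\mathcal{F}(LS(M))$. By the definition of a $2$-local derivation there is a derivation $D=D_{x,y}$ on $LS(M)$ with $\Delta(x)=D(x)$ and $\Delta(y)=D(y)$; since $\Delta$ vanishes on $\mathcal{F}(LS(M))$ and $y\in\mathcal{F}(LS(M))$, we get $D(y)=0$. As $M$ is of type I$_\infty$, every derivation on $LS(M)$ is inner by \cite[Theorem 2.7]{Alb2}, so $D=D_a$ for some $a=a_{x,y}\in LS(M)$; hence $\Delta(x)=ax-xa$ and $0=D(y)=ay-ya$, i.e.\ $a$ and $y$ commute. Now I would compute, using linearity of $\Phi$,
\[
\Phi\bigl(y\,\Delta(x)\bigr)=\Phi(yax)-\Phi(yxa).
\]
Because $\mathcal{F}(LS(M))$ is an ideal and $\Phi(uv)=\Phi(vu)$ whenever one of $u,v$ lies in $\mathcal{F}(LS(M))$, the identity \eqref{tr} upgrades to full cyclic invariance of $\Phi$ on products of three factors one of which lies in $\mathcal{F}(LS(M))$; in particular $\Phi(yxa)=\Phi(ayx)$, and since $ay=ya$ this equals $\Phi(yax)$. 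Therefore $\Phi\bigl(y\,\Delta(x)\bigr)=\Phi(yax)-\Phi(yax)=0$. As $y\in\mathcal{F}(LS(M))$ was arbitrary, Lemma~\ref{A} gives $\Delta(x)=0$, and since $x$ was arbitrary, $\Delta\equiv0$.

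I do not expect a serious obstacle here. The only points that need care are the passage to the faithful trace $\Phi$ through Lemma~\ref{A}, and the use of \cite[Theorem 2.7]{Alb2} on innerness of derivations of $LS(M)$ --- it is precisely innerness that produces the commuting element $a$ and makes the two terms in the trace computation cancel. Note also that the element $a=a_{x,y}$ depends on the chosen $y$, but this is harmless, since the resulting identity $\Phi\bigl(y\,\Delta(x)\bigr)=0$ no longer mentions $a$.
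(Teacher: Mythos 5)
Your proof is correct and follows essentially the same route as the paper: reduce to showing $\Phi\bigl(\Delta(x)y\bigr)=0$ for all $y\in\mathcal{F}(LS(M))$ via the innerness of $D_{x,y}$ and the tracial identity \eqref{tr}, then invoke Lemma~\ref{A}. The only cosmetic difference is that the paper applies $D_{x,y}$ to the product $xy$ and uses the Leibniz rule to write $\Delta(x)y=[a,xy]$, whereas you use $D_{x,y}(y)=0$ to get $ay=ya$ and cancel the two trace terms directly; the two computations are equivalent.
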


\begin{proof}
Let $\Delta: LS(M)\rightarrow LS(M)$ be a $2$-local derivation such that
 $\Delta|_{\mathcal{F}(LS(M))}\equiv 0.$
For arbitrary $x\in LS(M)$ and $y\in \mathcal{F}(LS(M))$
there exists a derivation $D_{x, y}$ on $LS(M)$
 such that
$\Delta(x)=D_{x, y}(x)$  and $\Delta(y)=D_{x, y}(y).$
By \cite[Theorem 2.7]{Alb2} there exists element $a\in LS(M)$
such that
$$
[a, xy]=D_{x, y}(xy)=D_{x, y}(x)y+xD_{x, y}(y)=\Delta(x)y+x\Delta(y),
$$
i.e.
$$
[a, xy]=\Delta(x)y+x\Delta(y).
$$
Since $y\in \mathcal{F}(LS(M))$ we have $\Delta(y)=0,$ and therefore
$
[a, xy]=\Delta(x)y.
$
By the equality \eqref{tr} we obtain that
$$
0 = \Phi(axy-xya)=\Phi\left([a, xy]\right)=\Phi\left(\Delta(x)y\right),
$$
i.e.
$\Phi(\Delta(x)y)=0$ for all $y\in \mathcal{F}(LS(M)).$
By Lemma \ref{A}
we have that $\Delta(x)=0.$
The proof is complete. \end{proof}

\begin{lemma}\label{AD}
Let $M$ be an arbitrary von Neumann algebra of type I$_\infty$
and let    $\Delta: LS(M)\rightarrow LS(M)$ is a $2$-local derivation.
Then  the restriction $\Delta|_{\mathcal{F}(LS(M))}$ of the operator $\Delta$
on $\mathcal{F}(LS(M))$ is  additive.
 \end{lemma}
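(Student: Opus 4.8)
The plan is to fix $x,y\in\mathcal F(LS(M))$, put
$$
\delta:=\Delta(x+y)-\Delta(x)-\Delta(y),
$$
observe that $\delta\in\mathcal F(LS(M))$ (because $\Delta$ maps this ideal into itself, as noted just before Lemma~\ref{A}), and prove that $\delta=0$ by showing $\Phi(z\delta)=0$ for every $z\in\mathcal F(LS(M))$ and then invoking Lemma~\ref{A}. So I would fix an arbitrary $z\in\mathcal F(LS(M))$ and work with it for the rest of the argument.

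The key step is to extract from the $2$-local data a convenient commutator form for $\delta$, probing with the very element $z$ against which we intend to pair. Since $M$ is of type I$_\infty$, every derivation on $LS(M)$ is inner by \cite[Theorem~2.7]{Alb2}. Applying the defining property of the $2$-local derivation $\Delta$ to the three pairs $(x+y,z)$, $(x,z)$ and $(y,z)$, one obtains elements $c_1,c_2,c_3\in LS(M)$ with
$$
\Delta(x+y)=[c_1,x+y],\qquad \Delta(x)=[c_2,x],\qquad \Delta(y)=[c_3,y],
$$
and $[c_1,z]=[c_2,z]=[c_3,z]=\Delta(z)$. Hence $a:=c_1-c_2$ and $b:=c_1-c_3$ both commute with $z$, and expanding $[c_1,x+y]=[c_1,x]+[c_1,y]$ gives $\delta=[a,x]+[b,y]$.

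Finally I would compute $\Phi(\delta z)$ using cyclicity of $\Phi$ against elements of $\mathcal F(LS(M))$ (the relations $\Phi(uv)=\Phi(vu)$ for $u\in LS(M)$, $v\in\mathcal F(LS(M))$, and \eqref{tr}). Since $az=za$ and $xz\in\mathcal F(LS(M))$,
$$
\Phi\big([a,x]z\big)=\Phi(axz)-\Phi(xaz)=\Phi(axz)-\Phi\big((xz)a\big)=\Phi(axz)-\Phi\big(a(xz)\big)=0,
$$
and likewise $\Phi\big([b,y]z\big)=0$, so $\Phi(\delta z)=\Phi(z\delta)=0$. Since $z\in\mathcal F(LS(M))$ was arbitrary, Lemma~\ref{A} yields $\delta=0$, i.e. $\Delta(x+y)=\Delta(x)+\Delta(y)$, which is the assertion.

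The only point that requires care is the choice in the middle step: one must feed the \emph{same} auxiliary element $z$ into all three pairs, so that the implementers differ by elements commuting with $z$; the decomposition $\delta=[a,x]+[b,y]$ with $a,b$ in the commutant of $z$ is then exactly what makes the trace identities collapse both summands. Beyond this I do not anticipate any real obstacle — in particular one should neither expect nor need the implementers $c_i$ to lie in $M$ or in any fixed corner of $LS(M)$: membership in $LS(M)$ is all that the innerness theorem provides, and all that the final computation uses.
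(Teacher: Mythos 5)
Your proof is correct, but it takes a genuinely different route from the paper's. You apply the $2$-local property three times against a common probe $z$ — to the pairs $(x+y,z)$, $(x,z)$, $(y,z)$ — so that the three implementing elements $c_1,c_2,c_3$ all agree on $z$, whence the differences $a=c_1-c_2$ and $b=c_1-c_3$ commute with $z$ and the defect $\delta=\Delta(x+y)-\Delta(x)-\Delta(y)$ becomes $[a,x]+[b,y]$; the commutation with $z$ is exactly what makes $\Phi(\delta z)=0$ under the tracial identities, and Lemma~\ref{A} finishes. The paper instead applies the $2$-local property once per pair $(x,y)$ to the product $xy$, obtaining $[a,xy]=\Delta(x)y+x\Delta(y)$ and hence, via \eqref{tr}, the adjointness relation $\Phi(\Delta(x)y)=-\Phi(x\Delta(y))$; additivity of $\Delta$ in the left slot is then inherited from the (automatic) additivity of $x\mapsto\Phi(x\Delta(w))$ in the right slot, and the defect is killed by testing against $w=b^{\ast}$ and invoking faithfulness of $\Phi$ directly rather than Lemma~\ref{A}. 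Your "common second point" argument is the standard device in the $2$-local derivations literature and is self-contained, at the cost of three invocations of the hypothesis per probe; the paper's version is more economical and sets up the same product identity $[a,xy]=\Delta(x)y+x\Delta(y)$ that is reused in Lemma~\ref{B}. All the steps you use — innerness of derivations on $LS(M)$ for type I$_{\infty}$ algebras, the ideal property of $\mathcal{F}(LS(M))$ guaranteeing that $xz$ and $axz$ lie in the domain of $\Phi$, and the trace identities — are available in the paper, so there is no gap.
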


\begin{proof} Let $\Delta: LS(M)\rightarrow LS(M)$ be
 a $2$-local derivation.
For each $x, y\in \mathcal{F}(LS(M))$ there exists a derivation $D_{x, y}$ on $LS(M)$
 such that
$\Delta(x)=D_{x, y}(x)$  and $\Delta(y)=D_{x, y}(y).$
By \cite[Theorem 2.7]{Alb2} there exists an  element $a\in LS(M)$
such that
$$
[a, xy]=D_{x, y}(xy)=D_{x, y}(x)y+xD_{x, y}(y)=\Delta(x)y+x\Delta(y),
$$
i.e.
$$
[a, xy]=\Delta(x)y+x\Delta(y).
$$
Similarly as in Lemma \ref{B}  we have
$$
0 = \Phi(axy-xya)=
\Phi([a, xy])=\Phi\left(\Delta(x)y+x\Delta(y)\right),
$$
i.e.
$\Phi(\Delta(x)y)=-\Phi(x \Delta(y)).$
For arbitrary $u, v, w\in \mathcal{F}(LS(M)),$
 set $x=u+v,$ $y=w.$ Then from above we obtain
$$
\Phi(\Delta(u+v)w)=-\Phi((u+v)\Delta(w))=
$$
$$
=-\Phi(u\Delta(w))- \Phi(v\Delta(w))=
\Phi(\Delta(u)w)+\Phi(\Delta(v)w)=\Phi((\Delta(u)+\Delta(v))w),
$$
and so
$$
\Phi((\Delta(u+v)-\Delta(u)-\Delta(v))w)=0
$$ for all $u, v, w\in \mathcal{F}(LS(M)).$
Denote
$
b=\Delta(u+v)-\Delta(u)-\Delta(v)
$ and put
$
w=b^\ast.$ Then
$\Phi(bb^\ast)=0.$
Since the trace $\Phi$ is faithful it follows that  $bb^\ast=0,$ i.e. $b=0.$
Therefore
$$
\Delta(u+v)=\Delta(u)+\Delta(v),
$$
i.e. $\Delta$ is an additive map on $\mathcal{F}(LS(M)).$
The proof is complete.
\end{proof}

The following theorem is the main result of this paper.

\begin{theorem}\label{Main}
Let $M$ be an arbitrary von Neumann algebra of type I$_\infty$
and let $\mathcal{A}$ be a $\ast$-subalgebra
of $LS(M)$ such that $M\subseteq \mathcal{A}.$
 Then every $2$-local
derivation $\Delta: \mathcal{A}\rightarrow \mathcal{A}$
 is a
derivation.
\end{theorem}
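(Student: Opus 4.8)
The plan is to first handle the case $\mathcal{A} = LS(M)$ and then deduce the general case. For $\mathcal{A} = LS(M)$, the strategy is to show that $\Delta$ coincides on all of $LS(M)$ with a single inner derivation. By Lemma~\ref{AD}, $\Delta$ is additive on the ideal $\mathcal{F}(LS(M))$, and combined with the $S(Z(M))$-homogeneity from Lemma~\ref{H}(1), the restriction $\Delta|_{\mathcal{F}(LS(M))}$ is an $S(Z(M))$-linear map satisfying $\Delta(x^2) = \Delta(x)x + x\Delta(x)$ (Lemma~\ref{H}(2)). A standard polarization/linearization argument then upgrades this to the Leibniz rule on $\mathcal{F}(LS(M))$: writing $\Delta((x+y)^2) = \Delta(x+y)(x+y) + (x+y)\Delta(x+y)$, expanding using additivity, and subtracting $\Delta(x^2)$ and $\Delta(y^2)$, one obtains $\Delta(xy+yx) = \Delta(x)y + \Delta(y)x + x\Delta(y) + y\Delta(x)$; since $\mathcal{F}(LS(M))$ is an ideal and semi-prime, a further standard argument (replacing $y$ by $xy+yx$, or the classical Herstein-type trick using semiprimeness) yields that $\Delta$ is a genuine derivation on $\mathcal{F}(LS(M))$.

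Next I would show this derivation on $\mathcal{F}(LS(M))$ is spatial, implemented by some $a \in LS(M)$. Since $M$ is type I$_\infty$, pick an increasing net of finite projections $e_\alpha \uparrow \mathbf{1}$; on each corner $e_\alpha LS(M) e_\alpha \subseteq \mathcal{F}(LS(M))$ the restriction of $\Delta$ is a derivation, and one extracts an implementing element, checking compatibility across the net using the uniqueness built into the $\sum z_i x_i$ construction and the semiprimeness of $\mathcal{F}(LS(M))$. This produces $a \in LS(M)$ with $\Delta(x) = [a,x]$ for all $x \in \mathcal{F}(LS(M))$. Now set $\Delta' = \Delta - D_a$, where $D_a$ is the inner derivation $D_a(x) = [a,x]$ on $LS(M)$; this $\Delta'$ is again a $2$-local derivation (the difference of a $2$-local derivation and a derivation is $2$-local, since $D_{x,y} - D_a$ works as the local witness) and it vanishes on $\mathcal{F}(LS(M))$. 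By Lemma~\ref{B}, $\Delta' \equiv 0$, so $\Delta = D_a$ is a derivation on $LS(M)$.

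For the general case of a $\ast$-subalgebra $\mathcal{A}$ with $M \subseteq \mathcal{A} \subseteq LS(M)$: given a $2$-local derivation $\Delta$ on $\mathcal{A}$, for each pair $x,y \in \mathcal{A}$ the witnessing derivation $D_{x,y}$ on $\mathcal{A}$ — one should check, using $M \subseteq \mathcal{A}$ and the fact that every derivation of $LS(M)$ is inner (\cite[Theorem 2.7]{Alb2}), together with a continuity/closability argument or the structure of $LS(M)$ — extends to (or is the restriction of) a derivation of $LS(M)$; alternatively one argues directly that $\Delta$ on $\mathcal{A}$ extends to a $2$-local derivation on $LS(M)$, applies the first part to get $\Delta = D_a|_{LS(M)}$, and then notes $D_a(\mathcal{A}) \subseteq \mathcal{A}$ automatically since $\Delta$ maps into $\mathcal{A}$. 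Either way one concludes $\Delta$ is a derivation.

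The main obstacle I expect is the passage from ``$\Delta$ is a derivation on the ideal $\mathcal{F}(LS(M))$'' to ``$\Delta$ agrees with a single inner derivation on that ideal'' — i.e., producing one global implementing element $a \in LS(M)$ out of the local pieces on the finite corners, coherently. This requires careful use of the type I$_\infty$ structure (the decomposition $x = \sum z_n x_n$ with $z_n x_n \in M$), the uniqueness of such decompositions, and semiprimeness of $\mathcal{F}(LS(M))$ to pin down $a$ uniquely; the subtlety is that $\mathcal{F}(LS(M))$ has no unit, so the usual ``$a = \Delta(\text{identity-like element})$'' shortcut is unavailable and one must genuinely glue along the net $e_\alpha \uparrow \mathbf{1}$. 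The rest — Lemmas~\ref{A}–\ref{AD} do the heavy lifting, and the polarization and the $\Delta - D_a$ reduction are routine.
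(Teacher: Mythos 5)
Your overall architecture coincides with the paper's: show that $\Delta|_{\mathcal{F}(LS(M))}$ is a linear Jordan derivation (Lemmas~\ref{H} and~\ref{AD}), upgrade it to a derivation via semiprimeness (this is exactly Bre\v{s}ar's theorem \cite{Bre2}, which your polarization/Herstein sketch reproduces), realize it as $x\mapsto [a,x]$ for some $a\in LS(M)$, subtract $D_a$ and invoke Lemma~\ref{B}, and finally reduce a general $\mathcal{A}$ to $LS(M)$. However, the two steps you yourself flag as delicate are left as gestures, and as sketched neither goes through. For the spatial implementation of $\Delta|_{\mathcal{F}(LS(M))}$ the paper does not glue implementing elements over corners: it quotes \cite[Corollary~3]{AKN}, which states that an $S(Z(M))$-linear derivation of $\mathcal{F}(LS(M))$ is spatial with implementing element in $LS(M)$. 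Your proposed gluing along $e_\alpha LS(M)e_\alpha$ stumbles at the outset, since a derivation of $\mathcal{F}(LS(M))$ need not map a corner $e_\alpha LS(M)e_\alpha$ into itself (one must first correct by terms involving $\Delta(e_\alpha)$), and you give no construction of the limit element $a$ nor a reason it belongs to $LS(M)$. Either cite the known result or actually carry out this construction; as written the central step of the proof is missing.

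The reduction from $\mathcal{A}$ to $LS(M)$ is likewise incomplete. Your first alternative --- extending each witnessing derivation $D_{x,y}$ from $\mathcal{A}$ to $LS(M)$ --- is unsupported: there is no a priori reason a derivation of a $\ast$-subalgebra extends to the ambient algebra. Your second alternative, extending $\Delta$ itself to a $2$-local derivation of $LS(M)$, is the paper's route, but the whole content lies in the explicit device you omit: since $M$ is of type I, every $x\in LS(M)$ admits mutually orthogonal central projections $z_n$ with $\bigvee_n z_n=\mathbf{1}$ and $z_nx\in M\subseteq\mathcal{A}$, so one sets $\tilde{\Delta}(x)=\sum_n z_n\Delta(z_nx)$; the $Z(M)$-homogeneity of $\Delta$ makes this well defined, shows $\tilde{\Delta}$ is again a $2$-local derivation, and gives $\tilde{\Delta}|_{\mathcal{A}}=\Delta$. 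Without this (or an equivalent) construction the general case is not proved. Note also that this is the only place where the hypothesis $M\subseteq\mathcal{A}$ is used, which your write-up never exploits.
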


\begin{proof}
First we consider the case $\mathcal{A}=LS(M).$
By Lemma \ref{AD} the restriction $\Delta|_{\mathcal{F}(LS(M))}$ of the operator $\Delta$
on $\mathcal{F}(LS(M))$ is  additive.
Further by Lemma \ref{H} $\Delta$ is a homogeneous.
 Therefore, the map  $\Delta|_{\mathcal{F}(LS(M))}$  is a linear Jordan derivation on
$\mathcal{F}(LS(M))$ in the sense of \cite{Bre2}. In
\cite[Theorem 1]{Bre2} it is proved that any Jordan derivation on
a semi-prime algebra is a derivation. Since $\mathcal{F}(LS(M))$ is semiprime,
therefore the linear operator $\Delta|_{\mathcal{F}(LS(M))}$ is a derivation on
$\mathcal{F}(LS(M)).$

Since by Lemma \ref{H} $\Delta$ is $S(Z(M))$-homogeneous then
 by  \cite[Corollary 3]{AKN}
the derivation  $\Delta|_{\mathcal{F}(LS(M))} : \mathcal{F}(LS(M))\rightarrow \mathcal{F}(LS(M))$
 is spatial, i.e.
\begin{equation}\label{F}
\Delta(x)=ax-xa, \, x\in \mathcal{F}(LS(M))
\end{equation}
for an appropriate $a\in LS(M).$

Let us show that  $\Delta(x)=ax-xa$ for all  $x\in LS(M).$
Consider the $2$-local derivation  $\Delta_0=\Delta-D_a.$
 Then from  the equality (\ref{F}) we obtain that  $\Delta_0|_{\mathcal{F}(LS(M))}\equiv 0.$
Now by Lemma~\ref{B} it follows that $\Delta_0\equiv 0.$
 This means that  $\Delta=D_a.$

Now let
$\mathcal{A}$ be an arbitrary  $\ast$-subalgebra
of $LS(M)$ such that $M\subseteq \mathcal{A}.$
Since   $M$ is a  type I von Neumann algebra for
any  element
$x\in LS(M)$  there exists  a sequence $\{z_n\}$ of
mutually orthogonal central projections with
$\bigvee\limits_{n\in \mathbb{N}}z_n=\mathbf{1}$
such that $z_n x\in M$ for all $n\in \mathbb{N}.$
Set
\begin{equation}\label{loc}
\tilde{\Delta}(x)=\sum\limits_{n\in \mathbb{N}}z_n \Delta(z_n x).
\end{equation}
Since the map  $\Delta$ is $Z(M)$-homogeneous, the equality
\eqref{loc} gives a well-defined $2$-local derivation on $LS(M).$
From above we have that $\tilde{\Delta}$  is a derivation.
Therefore $\Delta$ is a derivation. The proof is complete.
\end{proof}

\begin{corollary}\label{LSM}
Let $M$ be an arbitrary von Neumann algebra of type I$_\infty.$
 Then every $2$-local
derivation $\Delta: LS(M)\rightarrow LS(M)$
 is a
derivation.
\end{corollary}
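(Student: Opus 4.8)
The plan is to establish the result first for $\mathcal{A}=LS(M)$ and then reduce the general case to it by a central patching construction. So let $\Delta$ be a $2$-local derivation on $LS(M)$; as observed before Lemma~\ref{A}, $\Delta$ carries the ideal $\mathcal{F}(LS(M))$ into itself. By Lemma~\ref{AD} the restriction $\Delta|_{\mathcal{F}(LS(M))}$ is additive, and by Lemma~\ref{H}(1) it is $S(Z(M))$-homogeneous, hence in particular a $\mathbf{C}$-linear self-map of $\mathcal{F}(LS(M))$; Lemma~\ref{H}(2) gives $\Delta(x^{2})=\Delta(x)x+x\Delta(x)$, so $\Delta|_{\mathcal{F}(LS(M))}$ is a linear Jordan derivation. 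Since $\mathcal{F}(LS(M))$ is semiprime, Bre\v{s}ar's theorem \cite[Theorem~1]{Bre2} upgrades it to an associative derivation of $\mathcal{F}(LS(M))$, and feeding its $S(Z(M))$-homogeneity into \cite[Corollary~3]{AKN} shows this derivation is spatial: $\Delta(x)=ax-xa$ for all $x\in\mathcal{F}(LS(M))$ and some fixed $a\in LS(M)$.

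Next I would spread spatiality to the whole algebra. Put $\Delta_{0}=\Delta-D_{a}$. As the difference of a $2$-local derivation and the inner (hence $2$-local) derivation $D_{a}$, the map $\Delta_{0}$ is again a $2$-local derivation on $LS(M)$, and by the previous step it vanishes on $\mathcal{F}(LS(M))$. Lemma~\ref{B} then forces $\Delta_{0}\equiv0$, i.e.\ $\Delta=D_{a}$, a derivation. This disposes of $\mathcal{A}=LS(M)$ (and gives Corollary~\ref{LSM}).

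For an arbitrary $\ast$-subalgebra $\mathcal{A}$ with $M\subseteq\mathcal{A}\subseteq LS(M)$ and a $2$-local derivation $\Delta$ on $\mathcal{A}$, I would use that, $M$ being of type I, every $x\in LS(M)$ admits a sequence $\{z_{n}\}$ of mutually orthogonal central projections with $\bigvee_{n}z_{n}=\mathbf{1}$ and $z_{n}x\in M\subseteq\mathcal{A}$, so that $\Delta(z_{n}x)$ is defined and one may set $\tilde{\Delta}(x)=\sum_{n}z_{n}\Delta(z_{n}x)\in LS(M)$. Any derivation kills central projections (if $D$ is a derivation and $z=z^{2}$ is central, then $zD(z)=2zD(z)$, hence $zD(z)=0$, and likewise $z^{\perp}D(z)=0$, whence $D(z)=0$), so $\Delta$ is homogeneous with respect to central projections of $M$; this makes $\tilde{\Delta}$ independent of the chosen decomposition, a $2$-local derivation on $LS(M)$, and equal to $\Delta$ on $\mathcal{A}$. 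By the first two steps $\tilde{\Delta}$ is a derivation on $LS(M)$, and restricting it to the subalgebra $\mathcal{A}$ preserves the Leibniz rule, so $\Delta$ is a derivation.

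The main obstacle is the very first step: a priori $\Delta$ is merely a (possibly non-additive, non-homogeneous) map, and the engine that forces linearity and the Jordan identity on $\mathcal{F}(LS(M))$ is the $S(Z(M))$-valued trace $\Phi$ together with its cyclicity \eqref{tr} on that ideal. Applying the $2$-local hypothesis to a pair $(x,y)$ and invoking innerness of derivations on $LS(M)$ \cite[Theorem~2.7]{Alb2} produces $[a,xy]=\Delta(x)y+x\Delta(y)$; composing with $\Phi$ annihilates the commutator and leaves the bilinear-type relation $\Phi(\Delta(x)y)=-\Phi(x\Delta(y))$, from which Lemma~\ref{A} (faithfulness of $\Phi$) extracts additivity. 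The second, subtler point is the passage from spatiality on the ideal to spatiality on all of $LS(M)$: this does not follow from additivity alone, and it is precisely the $2$-local character of $\Delta_{0}$, once more combined with \eqref{tr}, that Lemma~\ref{B} exploits to close the gap.
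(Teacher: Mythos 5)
Your proposal is correct and follows the paper's own route essentially verbatim: additivity on $\mathcal{F}(LS(M))$ via Lemma~\ref{AD}, homogeneity and the Jordan identity via Lemma~\ref{H}, Bre\v{s}ar's theorem plus \cite[Corollary~3]{AKN} to obtain spatiality on the ideal, and Lemma~\ref{B} applied to $\Delta-D_a$ to conclude; the corollary is then the case $\mathcal{A}=LS(M)$ of Theorem~\ref{Main}. The extra material on general $\ast$-subalgebras is not needed for the corollary but matches the paper's proof of the theorem as well.
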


\end{document}